\tikzstyle{startstop} = [minimum width=3cm, text width = 5cm, minimum height=1cm, text centered, draw=black]
\tikzstyle{arrow} = [thick, ->, >=stealth]
\algrenewcommand{\algorithmiccomment}[1]{\hfill $\rhd$ \emph{#1}}
\algrenewcommand{\algorithmicrequire}{\textbf{Input:}}
\algrenewcommand{\algorithmicensure}{\textbf{Output:}}
\algnewcommand{\Or}{\textbf{or}}
\algnewcommand{\And}{\textbf{and}}
\algnewcommand{\Not}{\textbf{not}\,}
\algnewcommand\algorithmicforeach{\textbf{for each}}
\theoremstyle{definition}
\newtheorem{definition}{Definition}[section]
\newtheorem*{que*}{Question}
\theoremstyle{remark}
\newtheorem{rem}[definition]{Remark}
\theoremstyle{plain}
\newtheorem{theorem}[definition]{Theorem}
\newtheorem*{theorem*}{Theorem}
\newtheorem{prop}[definition]{Proposition}
\newcommand\RR{{\mathbb R}}
\newcommand\ZZ{{\mathbb Z}}
\newcommand{\val}[1]{\text{val}(#1)}
\DeclareMathOperator{\Trop}{Trop}
\newcommand\BShape{B}
\newcommand\cT{{\mathcal T}}
\title{Computing tropical bitangents to smooth quartic curves in \texttt{polymake}}
\author{Alheydis Geiger}
\address[Alheydis Geiger]{Department of Mathematics, University of T\"{u}bingen, Germany}
\email{\href{mailto:alheydis.geiger@math.uni-tuebingen.de}{alheydis.geiger@math.uni-tuebingen.de}}
\author{Marta Panizzut}
\address[Marta Panizzut]{ Technische Universität Berlin, Chair of Discrete Mathematics/Geometry  }
\email{\href{mailto:panizzut@math.tu-berlin.de}{panizzut@math.tu-berlin.de}}
\subjclass[2020]{52B55, 14T15, 14T20}
\begin{document}
	\bibliographystyle{plain}
	\setlength{\unitlength}{1cm}

	\begin{abstract} 
	 In this article we introduce the  recently developed \texttt{polymake} extension \texttt{TropicalQuarticCurves} and its associated database entry in \texttt{polyDB} dealing with smooth tropical quartic curves. We report on algorithms implemented to analyze  tropical  bitangents and their lifting conditions over real closed valued fields.  The new functions and data were used by the authors to provide a tropical proof of Pl\"ucker and Zeuthen's count of real bitangents to smooth quartic curves. 

	\end{abstract}
	\maketitle	
\section{Introduction}

This paper is concerned with computational aspects in the study of tropical bitangents to smooth tropical plane quartic curves. We introduce the \texttt{polymake} extension \texttt{TropicalQuarticCurves} \cite{extension:tropquartics}  and a new entry in the database framework \texttt{polyDB}~\cite{polydb:paper}.

Tropical geometry is a combinatorial shadow of algebraic geometry. It combines methods of polyhedral geometry and combinatorics to investigate problems in algebraic geometry. Algebraic varieties can be degenerated into tropical varieties via tropicalization. Elements in the fiber of the tropicalization are called lifts of the algebraic variety. In order to apply tropical methods, it is important to understand the lifting behavior. This work provides a foundational and exhaustive computational study of real lifting of bitangents to tropical quartic curves.

The counts of complex and real bitangent lines to smooth quartic curves are classical results of Pl\"ucker and Zeuthen \cite{Plue39, Zeu73}. The analogous count for tropical curves is an example of a superabundance phenomenon in tropical geometry: Baker et al. \cite{BLMPR16} showed that smooth tropical plane quartic curves have infinitely many tropical bitangents grouped into seven equivalence classes modulo continuous translations that preserve bitangency. 

When such a phenomenon appears, it is interesting to consider lifting questions, that is, to understand which tropical bitangents are tropicalizations of algebraic bitangent lines. In \cite{LeMa19}, Len and Markwig showed that, under some genericity assumptions, four representatives (counting multiplicities) in each equivalence class lift to bitangents over the field of complex Puiseux series. Therefore, they reproduced Pl\"ucker's  count of 28 complex bitangents to generic quartic curves. 

Cueto and Markwig \cite{CueMa20} considered lifts to real bitangent lines showing that each class has either zero or four. This implies that the number of real bitangents to a generic real quartic curve is divisible by four. This did not fully recover the classical result stating that they are $4$, $8$, $16$ or $28$ real bitangents. The gap was closed by \cite[Theorem 1]{1GP21}, which provides a tropical version of Pl\"ucker and Zeuthen's count.  The proof of this result relies on two key-steps: enumeration of deformation classes of tropical bitangents and the analysis of their lifting conditions over a real closed valued field. Both steps can be carried out computationally, as we will briefly illustrate now. 

The combinatorial type of a smooth tropical quartic curve is encoded in the regular unimodular triangulation of the fourth dilation of the standard $2$-dimensional simplex $4\Delta_2$ induced by its coefficients.  Cueto and Markwig \cite{CueMa20} classified the combinatorial structure
of the bitangent classes. The shape of a bitangent class of a tropical quartic is not fully determined by its combinatorial type, see  Figure \ref{fig:exampledef} and \cite[Example 2.1]{1GP21}. This motivated us to introduce  deformation
classes, collecting for each bitangent class the varying shapes that appear within the
same combinatorial type, see \cite[Definition 3.2]{1GP21}.  The conditions for admitting a lift to a bitangent over a
real closed valued field determined by Cueto and Markwig \cite{1GP21} only depend on the deformation class and not on the shapes \cite[Theorem 4.5]{1GP21}.

This allowed us to work only with deformation classes, combinatorial objects which are fixed by the combinatorial type of the curve and encoded in special triangles of the triangulation.  In this article, we provide details on how the enumeration of deformation classes in the $1278$ regular unimodular triangulations of $4\Delta_2$ and the analysis of their lifting conditions was carried out in \texttt{polymake} \cite{polymake:2000}, in order to obtain a tropical proof of Pl\"ucker and Zeuthen's theorem.

The triangulations computed by Brodsky et al. \cite{BJMS15} were available at the git repository \url{https://github.com/micjoswig/TropicalModuliData}. Our analysis enriched them with more data. Therefore, we decided to save them as the new collection \texttt{QuarticCurves} withing the tropical objects in the database \texttt{polyDB} \cite{polydb:paper}, together with the additional information encoding properties of tropical bitangent lines. They can be accessed via \texttt{polymake} or via an independent API. Moreover, they can be further studied in the extension \texttt{TropicalQuarticCurves} \cite{extension:tropquartics}, where our code is also available. 

As bitangents or more generally intersections are intensely studied by the tropical community, we expect the extension \texttt{TropicalQuarticCurves} and the entry in the \texttt{polymake} database \texttt{polyDB} to be useful for further research as they make computing examples on the topic easier and more accessible. Furthermore, they can be used to compute lifting conditions over arbitrary fields \cite[Remark 4.6]{1GP21}, and they could be extended to compute arithmetic multiplicities. Moreover, the provided data gives a starting point to study bitangents for higher degree curves from a tropical point of view. 

In recent years, computational methods are having a more prominent role in the proof of interesting results in tropical and algebraic geometry. We believe that our approach to this project rightly fits and addresses  the paramount questions and  challenges on how to make these methods and data  available and confirmable by the research community, following the FAIR data principles. We refer to the description of newly established consortium MaRDI \cite{MaRDI} for further details on this interesting topic. 
 
This paper is structured as follows. In Section \ref{sec:prelim}, we briefly recall main definitions about tropical curves and their bitangents, and results from the accompanying paper \cite{1GP21}. Section~\ref{sec:polymake} introduces the \texttt{polymake} extension   \texttt{TropicalQuarticCurves} and the associated database entry. We explain the new features and illustrate the usage providing code snippets. Section \ref{sec:trop.pluecker} revisits the main theorem of \cite{1GP21} and explains how the new extension was used in the proof providing computational details. 
Finally, Section \ref{sec:hyperplanes} explores details of the hyperplanes determining the change of shapes for the deformation classes. This provides useful data for current research in this area:
To understand enriched counts of bitangents as in \cite{LaVo19} tropically, deformation classes are not sufficient, as the arithmetic multiplicities vary depending on the bitangent shapes \cite{MPS21}.

\medskip
\textbf{Acknowledgments.} The authors thank Hannah Markwig, Michael Joswig and Angelica Cueto for interesting discussions on the topic. We are grateful to Lars Kastner and Benjamin Lorenz for their help with computations. We thank Hannah Markwig, Sam Payne and Kris Shaw for telling us about their current project and allowing us to mention it in our paper. The first author was funded by a scholarship of Cusanuswerk e.V. This work is a contribution to the SFB-TRR 195 'Symbolic Tools in Mathematics and their Application' of the German Research Foundation (DFG). 

\section{Preliminaries}\label{sec:prelim}

In this section, we introduce tropical plane curves and their structure, referring to \cite{MS15} for further details. Moreover, we give an account of the main definitions and results of the accompanying paper \cite{1GP21}, which classifies deformation classes of tropical bitangents. 

In the \emph{tropical semifield} $(\mathbb{R}\cup\{\infty\},\oplus,\odot)$ arithmetic is defined by $x\oplus y = \min\{x,y\}$ and $x\odot y = x+y$. A tropical polynomial  $f = \bigoplus_{\mathbf{u}=(u_1,u_2)} \lambda_{\mathbf{u}}\odot x^{\odot u_1}\odot y^{\odot u_2}$  is a finite tropical linear combination of tropical monomials  with coefficients in the tropical semiring.
The \emph{tropical plane curve} $\Trop(V(f))$ is defined as 
\begin{equation*}
	\Trop(V(f)) = \{(a,b)\in\RR\,\,| \text{ the minimum in } f(a,b) \text{ is achieved at least twice} \}.
\end{equation*}

Given a 
polynomial $F = \sum_{\mathbf{u}=(u_1,u_2)}a_{\mathbf{u}}x^{u_1}y^{u_2}$ defined over a valued field $(K, \text{val})$, there is a canonical way of turning it into a tropical polynomial, called 
 \emph{tropicalization}:
	\begin{align*}
		\Trop(F) &= \bigoplus_{\mathbf{u}=(u_1,u_2)} \val{a_{\mathbf{u}}}\odot x^{\odot u_1}\odot y^{\odot u_2} \\
		&=\min_{\mathbf{u}=(u_1,u_2)}\{\val{a_{\mathbf{u}}}+u_1\cdot x+u_2\cdot y \}.
	\end{align*}
Let $f$ be a tropical polynomial. The \emph{Newton polytope $N_f$} of $f$ is the lattice polytope conv$\{(u_1,u_2) \, |\, \lambda_{\mathbf{u}} \not = \infty\}$. The coefficients of $f$ induce a \emph{regular subdivision} of the (lattice points of the) Newton polytope by projecting the lower faces of $\text{conv}\{(\mathbf{u}, \lambda_{\mathbf{u}})\,|\,\lambda_{\mathbf{u}}\neq\infty \}\subset\RR^2\times \RR$  onto $N_f$.  We denote the subdivision induced by the coefficients $\lambda_{\mathbf{u}}$ by $\mathcal{S}_{\lambda_{\mathbf{u}}}$. 

Given a regular subdivision $\mathcal{S}$ of $N_f$, the set of coefficient vectors that induce the same subdivision $\mathcal{S}$ form a relatively open cone 
$$\Sigma(\mathcal{S}):= \{\mathbf{\lambda}\in \RR^{|P\cap\ZZ^2|} | \mathcal{S}_{\mathbb{\lambda}} = \mathcal{S} \},$$
 called the \emph{secondary cone} of $\mathcal{S}$. The collection of secondary cones forms the \emph{secondary fan} of $N_f$.
 
A subdivision is a \emph{triangulation} if it consists solely of simplices. A triangulation is \emph{unimodular} if every simplex has minimal lattice volume.
If the subdivision $\mathcal{S}$ is a unimodular triangulation, the cone $\Sigma(\mathcal{S})$ is full dimensional. A tropical curve is \emph{smooth} if the regular subdivision of $N_f$ defined by the coefficients $\lambda_{\textbf{u}}$ is a unimodular triangulation. 

The subdivision $\mathcal{S}_{\lambda_{\mathbf{u}}}$ induced by the coefficients $\lambda_{\mathbf{u}}$ of the tropical polynomial $f$ is dual to the tropical curve $\Trop(V(f))$ via an inclusion reversing bijection:
\begin{align*}
	\text{ Subdivision }  &\longleftrightarrow \text{ Tropical curve }\\	2-\text{dimensional cell } Q & \longleftrightarrow \text{ Vertex } v \\
	\text{ Edge } E & \longleftrightarrow \text{ Edge } e.
\end{align*}

The direction of the edges of the tropical curve is inward  orthogonal to the direction vector of the edges in the subdivision.
The positions of the vertices of the tropical curve can be computed as follows: Take the dual maximal dimensional cell of the subdivision and assume it is given by the convex hull of the lattice points $(i_0,j_0),\ldots,(i_k,j_k)$.
Solving the  system
\begin{align*}
	\lambda_{i_0,j_0} + i_0x+j_0y = ...= \lambda_{i_k,j_k} + i_kx+j_ky
\end{align*}
provides the position of the vertex of the tropical curve in $\RR^2$. For a proof of this duality and further details see \cite{MS15}.	

\begin{rem}
The tropical semifield  can also be defined as the set $\mathbb{R}\cup\{-\infty\}$ with $\oplus = \max$ and $\odot = +$. The $max$-convention for the tropical addition is used in \cite{CueMa20,1GP21}. However, please note that the implementation in \texttt{polymake} of subdivisions of point configurations and their secondary cones fits with the  $min$-convention, while for hyperplanes in the tropical application of\texttt{polymake} the user can choose between the $min$- and $max$-convention.

It is easy to switch between the two conventions. The tropicalization of a polynomial $f= \sum_{\mathbf{u}}c_{\mathbf{u}}\mathbf{x}^{\mathbf{u}}$ in the $max$-convention is defined as above, but we take as tropical coefficients  $c_{\mathbf{u}}\mapsto -\val{c_{\mathbf{u}}}$. The direction of each edge of the tropical curve is outward orthogonal to the one of its dual edge in the dual subdivision. 	
\end{rem}

We now present  definitions and main results on tropical bitangents of smooth quartic curves. 
The Newton polygon of a tropical smooth quartic curve is the 4-dilated 2-dimensional simplex $4\Delta_2$.
The permutation group $S_3$ acts on the lattice points of $4\Delta_2$ and consequently on its subdivisions. This can be seen by considering the homogenized lattice points of $4\Delta_2$ and letting $S_3$ act on them by permutation of the coordinates. 

As introduced in \cite[Definition 3.1]{BLMPR16}, a tropical line $\Lambda$ is bitangent to a smooth tropical plane quartic curve $\Gamma$ if their intersection $\Lambda \cap \Gamma$ has two components with stable multiplicity $2$, or one component with stable multiplicity $4$. As shown in the aforementioned paper,  a smooth tropical quartic curve has exactly $7$ equivalence classes of bitangent lines. Two bitangents are equivalent if they can be continuously translated into each other while preserving the bitangency.  

The \emph{tropical bitangent classes} to a smooth tropical quartic curve $\Gamma$ are the connected components in $\RR^2$ consisting of all the vertices of tropical bitangents in the same equivalence class. Their shapes up to $S_3$-symmetries and their respective intersection behavior with the quartic curve have been classified in \cite{CueMa20}.
These $41$ shapes are depicted in Figure~\ref{fig:Fig6}. Please note that Cueto and Markwig worked with the $max$-convention. The classification in the $min$-convention is obtained by mirroring the figures at the $(x=y)-$axis.
\begin{figure}[h]
	\includegraphics[width=10cm]{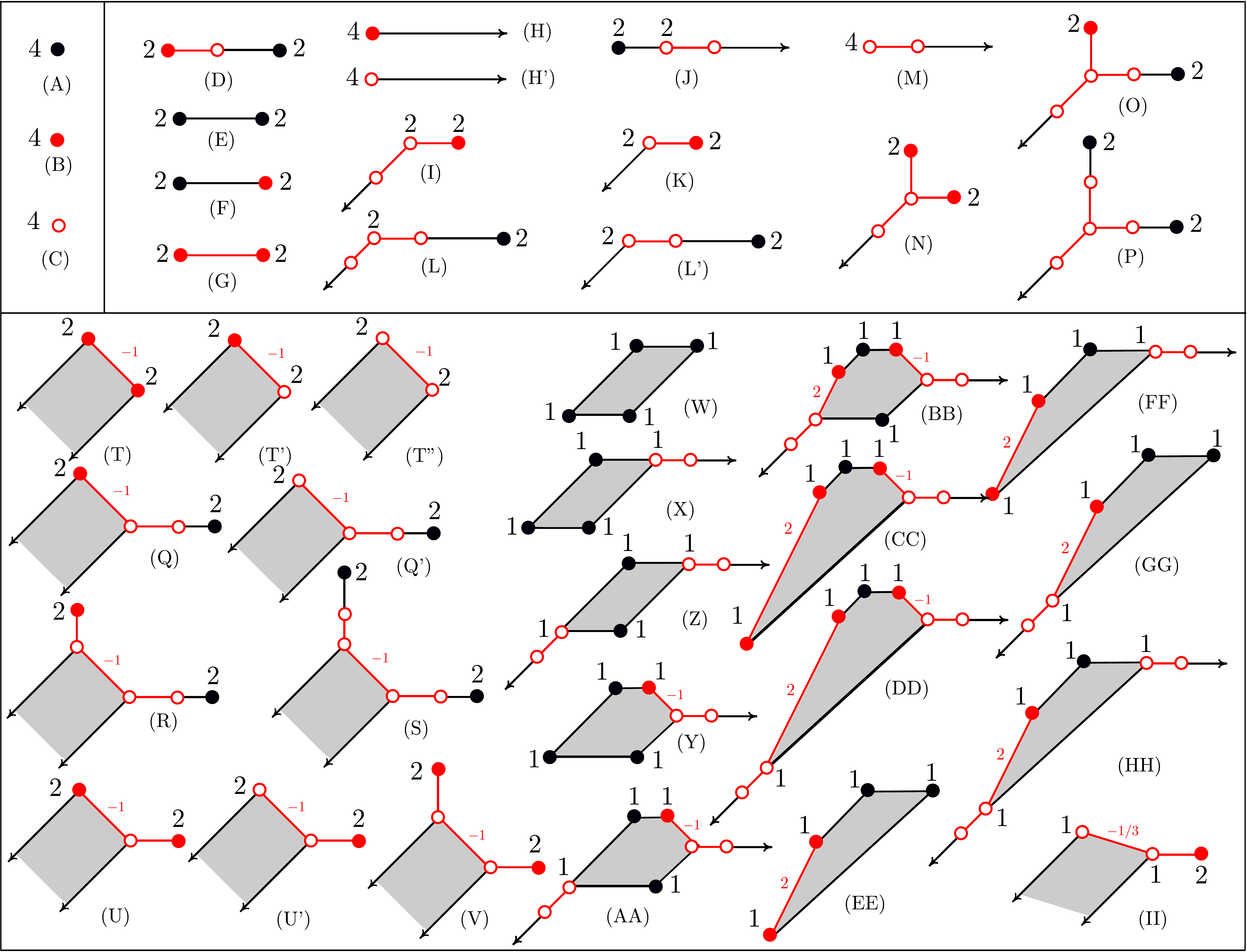}
	\caption{Shapes of bitangent classes on smooth quartics. The black numbers above the vertices indicate the lifting representatives in each class and their lifting multiplicities. Figure taken from \cite[Figure 6]{CueMa20}.}\label{fig:Fig6}
\end{figure}

It follows from the duality explained above that the shapes of the bitangent classes of a given tropical quartic impose the existence of certain subcomplexes in the dual subdivision.

\begin{definition}\cite[Definition 3.1]{1GP21} \label{def:motif}
	Let $\Gamma$ be a tropical smooth quartic curve with dual triangulation $\cT$, and let $\BShape$ be a bitangent class of $\Gamma$ of a fixed shape. The \emph{dual bitangent motif} of $(\Gamma, \BShape)$ is the subcomplex of $\cT$ that is fully determined by the shape of $\BShape$.  Dual bitangent motifs  are classified in  \cite[Figure 8]{CueMa20}.
\end{definition}

The existence of a dual bitangent motif to a certain shape in the dual subdivision $\cT$ does not imply that this shape will appear for a curve given by an arbitrary coefficient vector in $\Sigma(\cT)$.
Nevertheless, it is always possible to find a coefficient vector in $\Sigma(\cT)$ such that the shape will appear.
More precisely, given a dual subdivision, the shapes of the bitangent classes appearing for a curve $\Gamma$ of this combinatorial type can deform when continuously changing the edge lengths of $\Gamma$, see \cite[Example 2.1]{1GP21}.
This motivated the definition of deformation classes of tropical bitangents and their dual motifs. 
	
\begin{definition} \cite[Definition 3.2]{1GP21} \label{def:defclass}
	Given a tropical quartic $\Gamma_{c}$ with dual triangulation $\cT$, $c\in\Sigma(\cT)$, and a tropical bitangent class $B$, we say that a tropical bitangent class $\BShape'$ is in the same \emph{deformation class} as $\BShape$ if the following conditions are satisfied:
	\begin{itemize}
		\item[$\triangleright$] There exists $\Gamma_{c'}$ with $c' \in \Sigma(\cT)$ having $B'$ as one of its bitangent classes.
		\item[$\triangleright$] There is a continuous deformation from $\Gamma_{c}$ to $\Gamma_{c'}$ given by a path in the secondary cone $\Sigma(\cT)$ from $c$ to $c'$ that induces $\BShape$ to change to $\BShape'$.
	\end{itemize} 
	Given a unimodular triangulation $\cT$ of $4\Delta_2$ and a dual quartic curve $\Gamma$, let $\mathcal{D}$ be the deformation class of one of its seven bitangent classes. The \emph{dual deformation motif of $(\cT,\mathcal{D})$} is the union of the dual bitangent motifs of all shapes belonging to bitangent classes in  $\mathcal{D}$. Figure \ref{fig:defclasses} depicts the dual motifs of all the deformation classes, as classified in \cite{1GP21}.
\end{definition}

\begin{figure}
	\centering
	\includegraphics[width=1\textwidth]{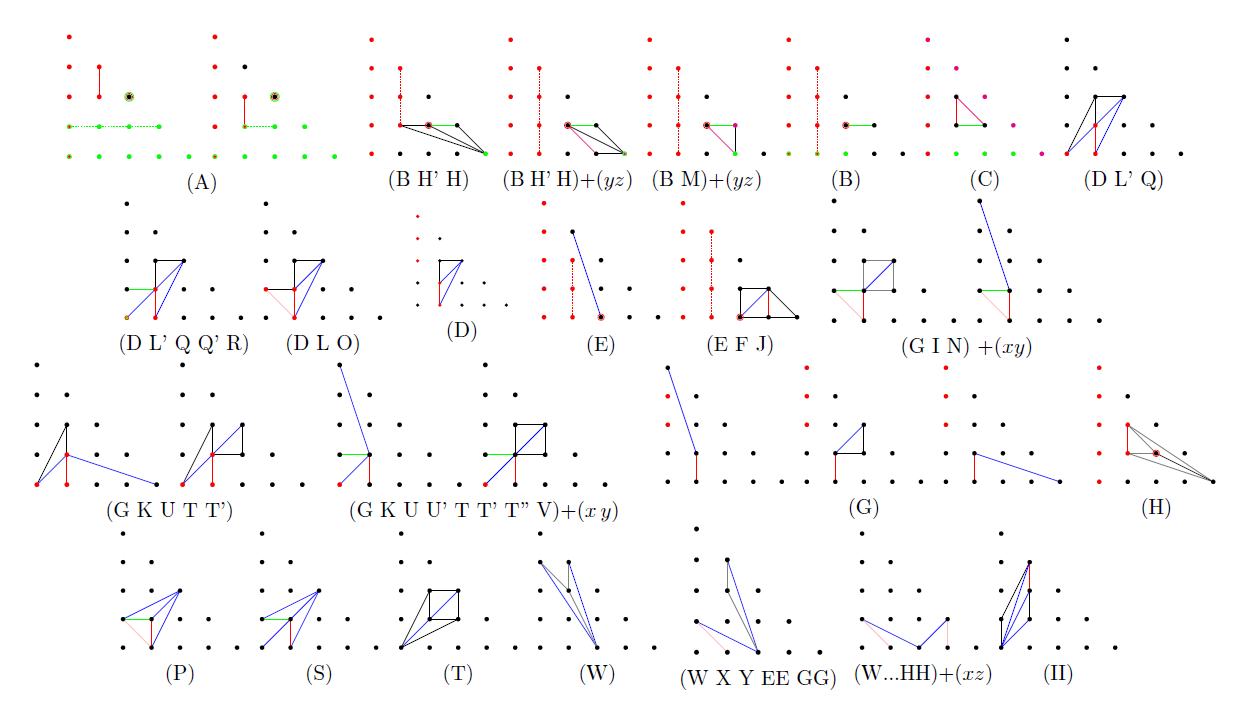}
	\caption{A list of all deformation classes of bitangent shapes.}\label{fig:defclasses}
\end{figure}

Cueto and Markwig computed conditions for an $S_3$-representative of every bitangent shape that determine whether the bitangent class of this shape would lift to real bitangents, \cite[Table 1]{CueMa20}.
These real lifting conditions are sign conditions on the coefficients of a real lift of the tropical quartic curve. 
By \cite[Theorem 4.5]{1GP21}, we know that the real lifting conditions are the same for all shapes inside the same deformation class.
Thus, the relevant signs for the lifting conditions depend only on the deformation class and the position of its dual motif inside the subdivision. 
In \cite[Table 5]{1GP21}, the real lifting conditions are presented for each deformation class.

\section{Smooth tropical quartic curves in \texttt{polymake}}\label{sec:polymake}

In this section, we present the extension \texttt{TropicalQuarticCurves} \cite{extension:tropquartics} and the associated database for combinatorial types of smooth tropical quartic curves and their deformation classes of tropical bitangents. The extension is available for \texttt{polymake} version 4.5 at 
\[
\hbox{\url{https://polymake.org/doku.php/extensions/tropicalquarticcurves}}
\]
 
 \noindent while the database entries are available in the collection \texttt{QuarticCurves} within the database \texttt{Tropical} of \texttt{polyDB} at  
\[
  \hbox{\url{https://db.polymake.org/}}
\]
The database can be accessed both via web interface and directly from \texttt{polymake}.
 \smallskip 
 
\emph{Using the extension.} In the extension \texttt{TropicalQuarticCurves}, we introduce the objects \verb|DualSubdivisionOfQuartic| and \verb|DeformationMotif| in the application fan, and the object \texttt{QuarticCurve<Addition>} in the application tropical. This procedure was inspired by the extension \texttt{TropicalCubics} \cite{JPS21} for the analysis of tropical cubic surfaces.

A \verb|DualSubdivisionOfQuartic| is a triangulation of the lattice points of $4\Delta_2$ in the fixed order illustrated in Figure \ref{fig:orderlatticepoints}, given by specifying either the weights or the maximal cells.
\begin{lstlisting}
fan > $S = new DualSubdivisionOfQuartic(MAXIMAL_CELLS=>[[0,1,2],[1,2,4], [2,4,12],[4,7,12],[2,8,12],[2,8,13],[8,12,13],[2,5,13],[5,9,13], [9,13,14],[7,11,12],[7,10,11],[4,7,10],[4,6,10],[3,4,6],[1,3,4]]);	
\end{lstlisting} 

\begin{figure}
\centering
\begin{subfigure}{0.45\textwidth}
	\centering
	\includegraphics[width=0.7\textwidth]{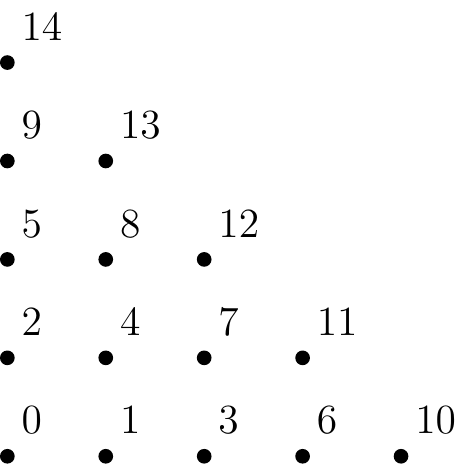}
	\caption{Order of lattice points of $4\Delta_2$ in the extension \texttt{TropicalQuarticCurves}. }\label{fig:orderlatticepoints} 
\end{subfigure}\quad
	\begin{subfigure}{0.45\textwidth}
		\centering
		\input{figures/DualSubdivisionOfQuartic.tikz}
		\caption{The dual subdivision \texttt{\$S}, with triangles of the dual deformation motif \texttt{\$DLO} filled.}\label{fig:exampleforcode}
	\end{subfigure}
\caption{}
\end{figure}
The object \verb|DeformationMotif| has three defining properties:  \verb|TRIANGLES|, \verb|TYPE| and \verb|SYMMETRY|. The property \verb|TRIANGLES| consists of the maximal cells of the triangulation that form the dual deformation motif. \verb|TYPE| is the name associated in the classification depicted in Figure \ref{fig:defclasses}. And \verb|SYMMETRY| specifies which element of the permutation group $S_3$ acts on the identity position of the deformation motif to transform it to the position indicated by the triangles. The permutation group $S_3$ is predefined under the property \verb|ACTION| of both \verb|DeformationMotif|  and \verb|DualSubdivisionOfQuartic|, so that the order of the group elements is always the same.
\begin{lstlisting}	
fan > $Motif1 = new DeformationMotif(TRIANGLES=>[[1,2,4],[2,4,12], [4,7,12]], TYPE=>"DLO",SYMMETRY=>1);
fan > print $Motif1->ACTION->ALL_GROUP_ELEMENTS;
0 1 2 3 4 5 6 7 8 9 10 11 12 13 14
0 2 1 5 4 3 9 8 7 6 14 13 12 11 10
10 6 11 3 7 12 1 4 8 13 0 2 5 9 14
10 11 6 12 7 3 13 8 4 1 14 9 5 2 0
14 9 13 5 8 12 2 4 7 11 0 1 3 6 10
14 13 9 12 8 5 11 7 4 2 10 6 3 1 0
\end{lstlisting}
When defining a \verb|DeformationMotif| from its properties, we  expect that the user makes sure that input properties are consistent with the classification in \cite{1GP21}.
There are additional properties of \verb|DeformationMotif| that can be computed from the three defining ones. For each \verb|DeformationMotif|, we can ask the real lifting conditions and the hyperplanes that describe the deformation of the shapes within the secondary cone. We refer to Section \ref{sec:hyperplanes} for more details on the hyperplanes.
\begin{lstlisting}
fan > print $Motif1->SIGN_CONDITIONS;
{-1 1 2 4 12}
{}
fan > print $Motif1->HYPERPLANES;
0 1 -1 0 1 0 0 -2 0 0 0 0 1 0 0
\end{lstlisting}

The property \verb|SIGN_CONDITIONS| returns two possibly empty sets.
The following example shows how to read the output. From \cite{CueMa20,1GP21} we know that the real lifting conditions for the deformation motif  \verb|$Motif1| is given by $-s_{01}s_{11}s_{10}s_{22}>0$. Since some deformation classes have two of these inequalities for their real lifting conditions, \verb|SIGN_CONDITIONS| is always a vector with two entries. In our example the second entry is empty, since we only have one inequality from the real lifting conditions. The lifting condition $-s_{01}s_{11}s_{10}s_{22}>0$ now translates to the set \verb|{-1 1 2 4 12}| as follows: we remember the initial minus sign by the \verb|-1| in the set. The remaining signs are assigned to coefficients of the quartic, so we encode them by the number the corresponding monomial has in the ordering fixed in the extension.

For every \verb|DualSubdivisionOfQuartic| we can compute all its $7$ dual deformation motifs and access their properties. Further, we can directly compute all the real lifting conditions associated to any generic triangulation. We refer to Section \ref{sec:trop.pluecker} for further details on the notion of genericity. 

\begin{lstlisting}
fan > print $S->IS_GENERIC;
true	
fan > $Signs = $S->ALL_SIGN_CONDITIONS;
fan > print "[".join("],[",map(join(",",@$_),@$Signs))."]\n"; #Improve output layout 
[{-1 2 8 12 13},{-1 1 2 4 12}],[{-1 2 8 12 13},{10 12}], [{-1 2 8 12 13},{10 12}],[{-1 1 2 4 12},{}],[{10 12},{}],[{10 12},{}], [{-1 1 2 4 10},{}]
\end{lstlisting}

We can also access the deformation motifs and their properties from \verb|S| as the property \verb|ALL_DEFORMATION_MOTIFS| provides an array of the deformation motifs associated to the \verb|DualSubdivisionOfQuartic|.
\begin{lstlisting}
fan > $Motifs = $S->ALL_DEFORMATION_MOTIFS;
fan > $Hyperplanes = $Motifs->[3]->HYPERPLANES;
fan > $Signconditions = $Motifs->[3]->SIGN_CONDITIONS;
fan > print $Motifs->[3]->properties;
type: DeformationMotif

ACTION
type: PermutationAction<Int, Rational>

HYPERPLANES
0 1 -1 0 1 0 0 -2 0 0 0 0 1 0 0

SIGN_CONDITIONS
{-1 1 2 4 12}
{}

SYMMETRY
1

TRIANGLES
{2 4 12}
{4 7 12}
{1 2 4}

TYPE
DLO
\end{lstlisting}

Moreover, the extension provides a function to compute the number of real bitangents for a given \verb|DualSubdivisionOfQuartic| and a given sign vector.

\begin{lstlisting}
fan > $v = new Vector<Int>([1,1,-1,1,-1,-1,1,1,1,-1,-1,1,1,1,-1]);
fan > print give_pluecker($S,$v);
4
\end{lstlisting}

In application tropical, the extension contains the new object \texttt{QuarticCurve<Addition>}, which is derived from the object \texttt{Hypersurface<Addition>}. The monomials are fixed in the order corresponding to the lattice points fixed for the \texttt{DualSubdivisionOfQuartic}, as illustrated in Figure \ref{fig:orderlatticepoints}. Thus, the \texttt{QuarticCurve} is defined by specifying the coefficients and the tropical operation \texttt{Min} or \texttt{Max}.

\begin{lstlisting}
fan > application "tropical";
tropical > $V = new Vector<Int>([-14,-9,-4,-6,0,-12,-4,0,-5,-21,-3,-1,0, -12,-31]);
tropical > $C = new QuarticCurve<Max>(COEFFICIENTS=>$V);
\end{lstlisting}

For a given curve, we can compute the shapes of the bitangent classes that appear for the chosen edge lengths.
\begin{lstlisting}
tropical > print $C->BITANGENT_SHAPES;
A A A D E E I
\end{lstlisting}

\emph{Exploring the database.} The $1278$ unimodular regular triangulations of $4\Delta_2$ computed by Brodsky et al. \cite{BJMS15} were first available at the git repository \url{https://github.com/micjoswig/TropicalModuliData}. 
They are now stored as new collection \texttt{QuarticCurves} in the database \texttt{polyDB} \cite{polydb:paper} together with additional information encoding properties of tropical bitangent lines. Each triangulation has a unique identifier, an integer between $1$ and $1278$ that can be used to retrieve it from the database.

For each triangulation given by its maximal cells, the database entry contains:
\begin{itemize}
	\item  the GKZ-vector: \verb|GKZ_VECTOR|, 
	\item a minimal representative as explained below: \verb|MINIMAL_REPRESENTATIVE|, 
	\item a boolean stating whether the triangutlation is generic: \verb|IS_GENERIC|,
	\item all dual deformation motifs: \verb|ALL_DEFORMATION_MOTIFS| and for each of these
	\begin{itemize}
		\item their sign conditions: \verb|SIGN_CONDITIONS|,
		\item their associated hyperplanes: \verb|HYPERPLANES|, 
	\end{itemize}  
	\item the Pl\"ucker number of their possible real bitangents: \verb|PLUECKER_NUMBERS|,
	\item  an exemplary sign vector for each Pl\"ucker number:  \verb|SIGN_REPRESENTATIVES|.
\end{itemize}
The \verb|GKZ_VECTOR| of a triangulation $\mathcal{T}$ is $(\text{vol}_{\cT}(p_i) \, |\, i\in \{0, \dots, 14\})$ where $p_i$ are the lattice points of $4\Delta_2$ in the fixed order and $\text{vol}_{\cT}(p_i)$ is the sum of the Euclidean volume of the simplicies in $\mathcal{T}$ containing $p_i$. The \verb|MINIMAL_REPRESENTATIVE| to a given triangulation $\mathcal{T}$ is the minimal triangulation in the $S_3$ orbit of $\mathcal{T}$ with respect to the lexicographical order on the vertex labels in the maximal cells. 

Not all these properties are displayed on the web interface. However, after downloading the JSON source file for a chosen triangulation from the database, the data can be loaded into polymake and investigated within the extension.

Given a triangulation, the property \verb|MINIMAL_REPRESENTATIVE| is also used to find the $S_3$-representative in the database. 
\begin{lstlisting}
fan > $S  = new DualSubdivisionOfQuartic(MAXIMAL_CELLS=>[[6,10,11],
[3,6,11],[3,7,11],[7,11,12],[1,3,7],[4,8,12],[8,12,13],[0,4,8],[5,9,13],
[9,13,14],[5,8,13],[2,5,8],[4,7,12],[0,4,7],[0,2,8],[0,1,7]]);
 fan > print find_in_database($S);
100
\end{lstlisting}
The function \verb|find_in_database()| computes the minimal representative of the given triangulation and compares it with the ones stored in the database returning the identifier. 

\section{Tropical enumeration of real bitangent lines}\label{sec:trop.pluecker}
We now show how the described \texttt{polymake} code was used to provide a tropical proof of Pl\"ucker and Zeuthen's count of real bitangent lines to generic plane quartic curves in \cite{1GP21}. 

\begin{theorem} \emph{\cite[Theorem 1.1]{1GP21}}  \label{thm:Plücker}
	Let $\Gamma$ be a generic tropicalization of a smooth quartic plane curve  defined over a real closed complete non-Archimedean valued field. Either  $1$, $2$, $4$ or $7$ of its bitangent classes admit a lift to real bitangents near the tropical limit. Every smooth quartic curve whose tropicalization is generic has either $4$, $8$, $16$ or $28$ totally real bitangents.
\end{theorem}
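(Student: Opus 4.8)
The strategy is to reduce the statement to a bounded, machine-checkable enumeration over the $1278$ combinatorial types of smooth tropical quartics, using the structural results already recalled. By \cite{BLMPR16} the curve $\Gamma$ has exactly seven bitangent classes, and by \cite{CueMa20} each of them lifts to either $0$ or $4$ real bitangents near the tropical limit; hence the number of totally real bitangents of any real lift of $\Gamma$ equals $4k$, where $k\in\{0,1,\dots,7\}$ is the number of classes that lift, and it suffices to show $k\in\{1,2,4,7\}$. By \cite[Theorem 4.5]{1GP21} whether a given class lifts depends only on its deformation class, which is determined by the dual triangulation $\cT$ of $\Gamma$ together with the placement of the associated dual deformation motif inside $\cT$; moreover the criterion is a sign condition on the coefficients $a_{\mathbf{u}}$ of a real lift $F=\sum a_{\mathbf{u}}x^{u_1}y^{u_2}$. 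As $\Gamma$ is a generic smooth tropicalization, all fifteen coefficients are nonzero, so the relevant datum is the sign vector $\epsilon\in\{\pm1\}^{15}$, and each criterion asks that a fixed product of a few of the $\epsilon_{\mathbf{u}}$ — possibly after a prescribed overall sign, and for some deformation classes a conjunction of two such products — be positive.

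First I would enumerate, for each of the $1278$ regular unimodular triangulations of $4\Delta_2$ — or, exploiting the $S_3$-action on the lattice points of $4\Delta_2$, one representative per orbit — the seven dual deformation motifs, and for each of them read off its sign condition in terms of $\epsilon$. This is exactly the data that the extension produces via \texttt{ALL\_DEFORMATION\_MOTIFS} and \texttt{ALL\_SIGN\_CONDITIONS}: from the classification of deformation motifs in \cite{1GP21} together with the lifting conditions of \cite[Table 1]{CueMa20} (re-assembled per deformation class as in \cite[Table 5]{1GP21}) one reads off, for the reference position of each motif, the relevant product of coefficient signs, and then transports it to the actual position recorded by the \texttt{SYMMETRY} datum by letting the corresponding element of $S_3$ act on the monomial labels. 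Care is needed only in the bookkeeping: one must be consistent about this $S_3$-transport and about the passage between the $\min$- and $\max$-conventions (the latter negating valuations and mirroring the reference figures at the line $x=y$), so that the sign condition attached to each of the seven deformation classes of every triangulation is correct.

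Then, for every triangulation and every $\epsilon\in\{\pm1\}^{15}$, I would count the number $k(\epsilon)$ of the seven sign conditions satisfied by $\epsilon$ and verify that $k(\epsilon)\in\{1,2,4,7\}$; by the $0$-or-$4$ dichotomy this gives that any real lift with sign pattern $\epsilon$ has exactly $4k(\epsilon)$ totally real bitangents, so the count always lies in $\{4,8,16,28\}$. The nominal $1278\cdot2^{15}$ cases collapse: each sign condition involves only a bounded number of coordinates of $\epsilon$ (over $\mathbb{F}_2$ it is an affine-linear constraint, or a conjunction of two), so per triangulation only finitely many genuinely distinct patterns occur, and the $S_3$-reduction further shortens the list of triangulations; the routine \texttt{give\_pluecker} realizes one such evaluation. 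That all four values $4,8,16,28$ are attained is then witnessed by explicit triangulations and sign vectors, consistent with the classical theorem of Pl\"ucker and Zeuthen, and the fact that $k(\epsilon)$ never equals $0,3,5$ or $6$ is precisely the content of the verification.

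I expect the main obstacle to be conceptual rather than computational: namely (i) getting the dictionary right, i.e.\ correctly transporting the Cueto--Markwig real lifting conditions through the $S_3$-symmetry and the convention change so that the sign data attached to all $7\cdot1278$ deformation motifs is valid; and (ii) using the genericity hypothesis exactly where it is needed — to ensure each bitangent class contributes $0$ or $4$ (rather than being counted with extra multiplicity or degenerating) and that the sign-condition criterion is the honest criterion for the existence of a real bitangent lift near the tropical limit. Once these are pinned down, verifying that $k(\epsilon)$ avoids $0,3,5,6$ is a finite computation, carried out in \texttt{polymake}.
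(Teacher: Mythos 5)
Your proposal matches the paper's proof: the same reduction to counting, per triangulation, how many of the seven deformation-class sign conditions a sign vector in $\{\pm1\}^{15}$ satisfies, with the motifs enumerated via the $S_3$-action (Algorithm 1) and the conditions evaluated over all standardized sign vectors (Algorithm 2) in \texttt{polymake}. No substantive differences.
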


The notion of genericity used in this theorem is due to \cite{CueMa20}. A main factor is the assumption that if the tropical curve $\Gamma$ contains a vertex adjacent to three bounded edges with directions $-e_1$, $-e_2$ and $e_1+ e_2$, the shortest edge is unique. 
Up to $S_3$-symmetry there are $8$ regular unimodular triangulations of $4\Delta_2$ that can never satisfy this genericity condition. They are different completions of the subdivision shown in Figure~\ref{fig:nongeneric}.

\begin{figure}[h]
	\includegraphics[width=0.15\textwidth]{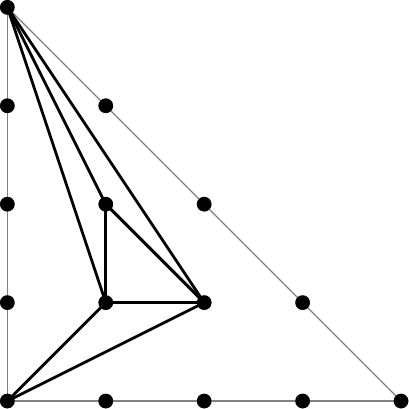}
	\caption{All tropical quartics with combinatorial type any of the $8$ refinements to a unimodular triangulation of this subdivision can never satisfy the genericity condition. }\label{fig:nongeneric}
\end{figure}
In the database entry \texttt{QuarticCurves} described in the previous section, representatives of these non-generic triangulations can be accessed under the identifiers \#511, \#718, \#841, \#904, \#1094, \#1113, \#1190 and \#1262. If the chosen triangulation is not generic, we do not return real lifting conditions for the deformation class (C), since these are not yet understood. At this point we want to recommend to use all the functions with caution, whenever you are working with a non-generic triangulation.
Furthermore, even for a generic triangulation there might be points in the secondary cone, such that the corresponding quartic curve is not generic. This set of non-generic points inside a generic secondary cone is always lower dimensional.

Let us recapitulate the idea of the proof of Theorem \ref{thm:Plücker}. First we need to enumerate the dual deformation motifs in each regular unimodular triangulation of $4\Delta_2$. 
Because of the $S_3$ action on the subdivision of $4\Delta_2$, we can restrict to representatives of the orbits under $S_3$  when investigating all unimodular triangulations of $4\Delta_2$. Such representatives were computed by Brodsky et al. \cite{BJMS15}. They are $1278$  of which $1270$ are generic.
Algorithm \ref{algo:motif-occurrences} describes how we enumerate all the dual deformation motifs for a given regular unimodular triangulation of $4\Delta_2$.

\begin{algorithm}[h] \label{alg:one}
	\caption{Finding all dual deformation motifs}
	\label{algo:motif-occurrences}
	\begin{algorithmic}[1]
		\Require{Unimodular regular triangulation $\mathcal{T}$ of $4\Delta_2$.}
		\Ensure{The list of all dual deformation motifs in $\mathcal{T}$.}
		\ForEach{dual deformation motif $M$ in Figure \ref{fig:defclasses}} 
		\ForEach{$\sigma\in S_3$}
		\If{the triangles of $\sigma(M)$ are contained in $\mathcal{T}$ }
		\State output $(\sigma(M),\sigma)$.
		\EndIf
		\EndFor
		\EndFor
	\end{algorithmic}
\end{algorithm}

Thus, we can count the number of all deformation classes for each of the 1278 representatives of the unimodular triangulations, confirming that each tropical smooth quartic curve has $7$ different associated deformation classes.

 By \cite[Theorem 4.5]{1GP21}, we know that the real lifting conditions of bitangent classes of a given tropical quartic curve depend only on the dual subdivision.
To the identity position of each deformation class we associated a matrix of two sets of integers encoding the up to two inequalities that pose the real lifting conditions.
The function detecting the dual deformation motifs  inside a given triangulation remembers the element $\sigma\in S_3$ that transforms the corresponding identity position  into the position occurring  in the triangulation. Hence, we can let $\sigma$ act on the sets of sign conditions and compute the set of real lifting conditions for any given tropical smooth quartic curve.

To count the possible number of real bitangents of a real lift of a given tropical quartic, we have to determine 
 how many of the real lifting conditions can be satisfied by a sign vector with $15$ entries.
We can standardize the sign vectors to start with $+1$, thus limiting to $2^{14}$ vectors that have to be checked in general.

\begin{algorithm}[h] \label{alg:two}
	\caption{Computing the possible numbers of real bitangents.}
	\label{algo:Plueckerscount}
	\begin{algorithmic}[1]
		\Require{Unimodular regular triangulation $\mathcal{T}$ of $4\Delta_2$.}
		\Ensure{The list of all possible numbers of real bitangents of an algebraic quartic curve with dual subdivision $\mathcal{T}$.}	
		\ForEach{$v\in\{\pm 1\}^{15}$ starting with 1}
		\State $n$ = 0
		\ForEach{lifting condition $c$ of $\mathcal{T}$}
		\If{$c(v)$ is true}
		\State $n$ + 4.
		\EndIf
		\EndFor
		\If{the value of $n$ did not appear before}
		\State output $n$
		\EndIf
		\EndFor
	\end{algorithmic}
\end{algorithm}

We implemented Algorithm 1 and 2 in \texttt{polymake} using the objects and functions introduced in the extension \texttt{TropicalQuarticCurves} obtaining a tropical computational proof that every smooth quartic curve whose tropicalization is generic has either $4$, $8$, $16$ or $28$ real bitangent lines. The code is available in the extension.

\section{Hyperplane arrangements}\label{sec:hyperplanes}
Despite the fact that the real lifting conditions only depend on the deformation class and not on the actual shape which a bitangent class has for a given tropical smooth quartic curve, it can be interesting to know which shape is realized for fixed edge lengths. 

This is important for the study of bitangent lines to quartic curves in arithmetic geometry.
For example, the arithmetic multiplicity of bitangent classes of quartic curves over any field, as investigated in \cite{MPS21}, can vary between different shapes in the same deformation class. More precisely, \cite{MPS21} provides examples of shapes with different arithmetic multiplicities that belong to the same deformation class.  One of them it is given by the shapes (W) and (BB) of deformation class(W-...-HH)$+(xz)$, which have different arithmetic multiplicities.

The deformations of the shapes inside a dual deformation class are described by linear equations, which implies that for their investigation we  consider hyperplane arrangements over a cone. 

\begin{prop}
	For a given unimodular triangulation $\cT$ of $4\Delta_2$ we can subdivide the associated secondary cone $\Sigma(\cT)$ by hyperplanes, such that for each chamber the bitangent shapes of the corresponding quartic curves are constant.
\end{prop}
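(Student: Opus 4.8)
The plan is to use that on the secondary cone $\Sigma(\cT)$ the combinatorial type of the quartic is frozen, so that the geometry of $\Gamma_c$ and of its bitangents depends on the coefficient vector $c=(\lambda_{\mathbf u})_{\mathbf u\in 4\Delta_2\cap\ZZ^2}$ only through finitely many linear quantities. First I would record that every vertex of $\Gamma_c$ is a homogeneous linear function of $c$: by the duality recalled in Section~\ref{sec:prelim}, a vertex dual to a triangle $\mathrm{conv}\{(i_0,j_0),(i_1,j_1),(i_2,j_2)\}$ of $\cT$ has coordinates solving
\begin{equation*}
  \lambda_{i_0,j_0}+i_0x+j_0y=\lambda_{i_1,j_1}+i_1x+j_1y=\lambda_{i_2,j_2}+i_2x+j_2y,
\end{equation*}
a linear system whose coefficient matrix is a fixed invertible integer matrix, and the same solution formula holds throughout $\Sigma(\cT)$ because $\cT$ is constant there. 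Hence every bounded edge length of $\Gamma_c$, and every difference of two vertex coordinates or of two such lengths, is the value at $c$ of a fixed linear functional on $\RR^{|4\Delta_2\cap\ZZ^2|}$.

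Next I would show that the shape of each of the seven bitangent classes is governed by the sign pattern of finitely many of these functionals. A tropical line is bitangent exactly when its vertex lies in one of the bitangent classes, and by the Cueto--Markwig classification the shape of a class, together with its dual motif, is one of finitely many ($41$ up to $S_3$) combinatorial data. A transition between shapes as $c$ moves in $\Sigma(\cT)$ happens precisely when a bounded edge of $\Gamma_c$ degenerates, or when a length or a difference of lengths occurring in the bitangent configuration vanishes, or when two competing edge lengths exchange their order --- for instance the three bounded edges meeting at a vertex with primitive directions $-e_1,-e_2,e_1+e_2$, which enter the genericity condition; see \cite[Example~2.1]{1GP21}. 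Each such event is the vanishing of one of the linear functionals of the first step, and collecting them over the seven classes and the finitely many possible transitions produces a finite list $\ell_1,\dots,\ell_N$ so that the locus in $\Sigma(\cT)$ realising a prescribed tuple of shapes is cut out by sign conditions on the $\ell_i$. I expect this step to be the main obstacle: one must check, against the Cueto--Markwig list of shapes and the deformation analysis of \cite{1GP21}, that the shape of a bitangent class really is a function of finitely many linear comparisons among vertex coordinates and edge lengths, and --- for an explicit, minimal arrangement --- which comparisons these are. The soft finiteness input (only $41$ shapes, $7$ classes, everything piecewise linear in $c$) already guarantees that each shape locus is a finite union of relatively open polyhedral cones, hence that some finite central hyperplane arrangement works; identifying the actual hyperplanes is the computational content realised by \verb|HYPERPLANES| in the extension.

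Finally, setting $\mathcal H=\bigcup_{i=1}^N\{\ell_i=0\}$, on every relatively open chamber of the polyhedral subdivision of $\Sigma(\cT)$ induced by $\mathcal H$ each $\ell_i$ has constant sign, so by the previous step the shape of every one of the seven bitangent classes is constant on that chamber. This gives the asserted subdivision of $\Sigma(\cT)$ by hyperplanes, each recorded by its defining linear functional $\ell_i$, matching the \verb|HYPERPLANES| output.
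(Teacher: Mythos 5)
Your proposal is correct and follows essentially the same route as the paper: within $\Sigma(\cT)$ the vertex positions and edge lengths of $\Gamma_c$ are linear in the coefficients, shape transitions are governed by sign changes of finitely many such linear functionals, and the actual identification of these functionals (the content of \verb|HYPERPLANES| and Table~\ref{tab:all}) is a finite case-by-case computation against the classification of shapes. The only minor imprecision is your mention of a bounded edge ``degenerating'' --- this cannot happen in the relatively open cone $\Sigma(\cT)$ --- but your other listed mechanisms (vanishing or reordering of linear length comparisons) are exactly the ones the paper uses.
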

\begin{proof}
 We use the notation $(\lambda_{ij})$ for a coefficient vector in $\Sigma(\cT)$ whose coordinates correspond to the lattice points $p_{ij}$ of $4\Delta_2$.  The main argument of the proof is that the deformations between shapes inside a deformation class are described by linear inequalities that provide  hyperplanes refining the secondary cone. 
	
	As introduced in Definition \ref{def:defclass}, given a tropical bitangent class, its  deformation class is  the collection of bitangent classes to which it can deforms when continuously moving  the coefficients in a path  in $\Sigma(\cT)$. In other words, a deformation of shapes happens when edge length changes in the tropical curve modify the way how the bitangent line can be  continuously moved while keeping  bitangency to the quartic.  The changes of shapes corresponding  to choosing different coefficients $(\lambda_{ij})\in\Sigma(\cT)$ are described by relative positions of   vertices and edges of the quartic curve. Therefore, they are given by linear inequalities in $(\lambda_{ij})$. See Figure \ref{fig:exampledef} for an example. 
	
 We computed the linear conditions for the deformations between shapes of each deformation class in identity position.  Table \ref{tab:all} states all the linear inequalities describing the different shapes.
	
	\begin{figure}

\begin{minipage}[c]{0.4 \textwidth}
		\centering
		\includegraphics[width=0.5\textwidth]{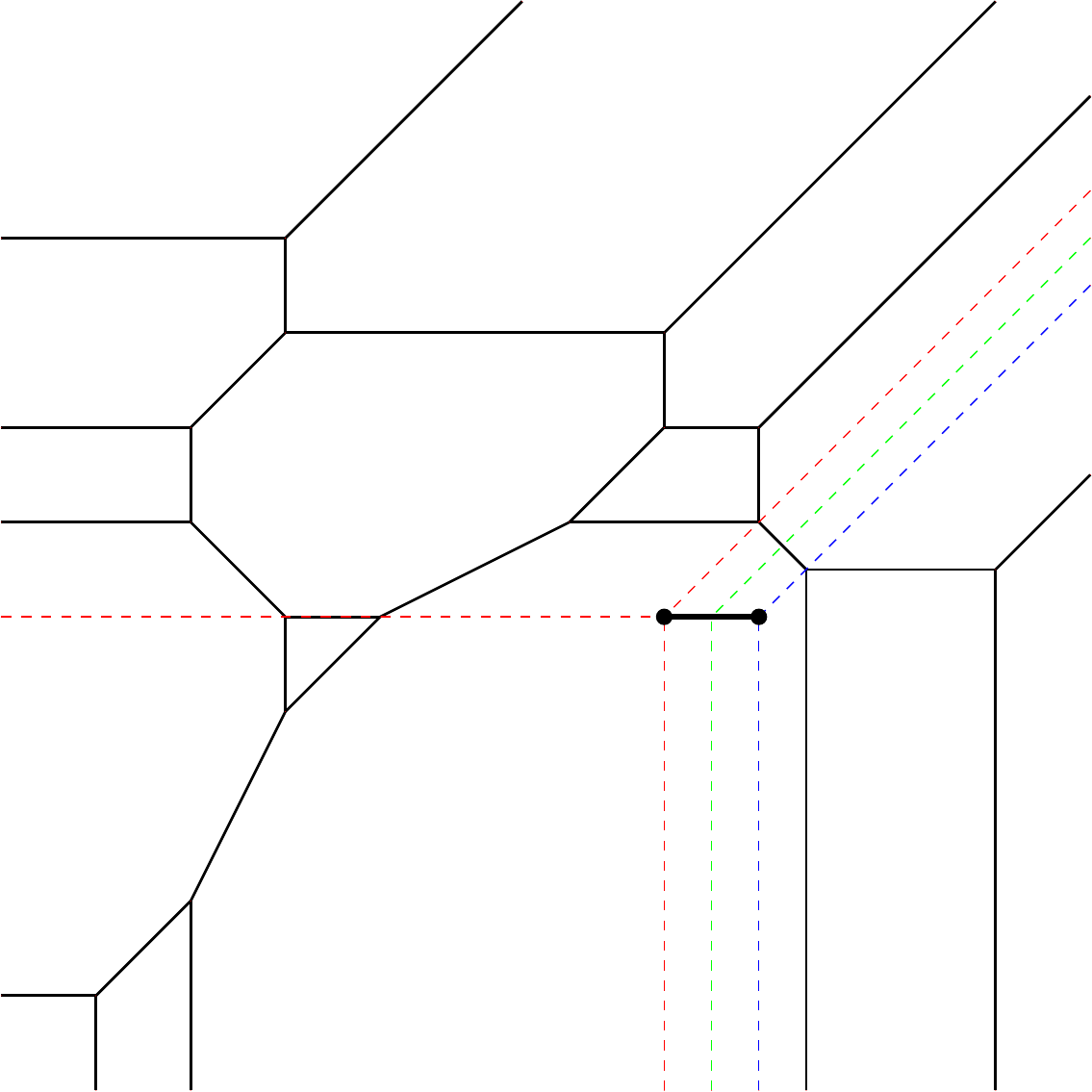}
\end{minipage}
\begin{minipage}[c]{0.5\textwidth}
	For $\lambda_1 =(0,5,5,9,8,5,6.5,9,9,4,2,7,8,7,1)$ we have $(\lambda_{1})_{8}-(\lambda_{1})_{4} < (\lambda_{1})_{11}-(\lambda_{1})_{6}$, so we see Shape (E).
\end{minipage}

\begin{minipage}[c]{0.4\textwidth}
	\centering
	\includegraphics[width=0.5\textwidth]{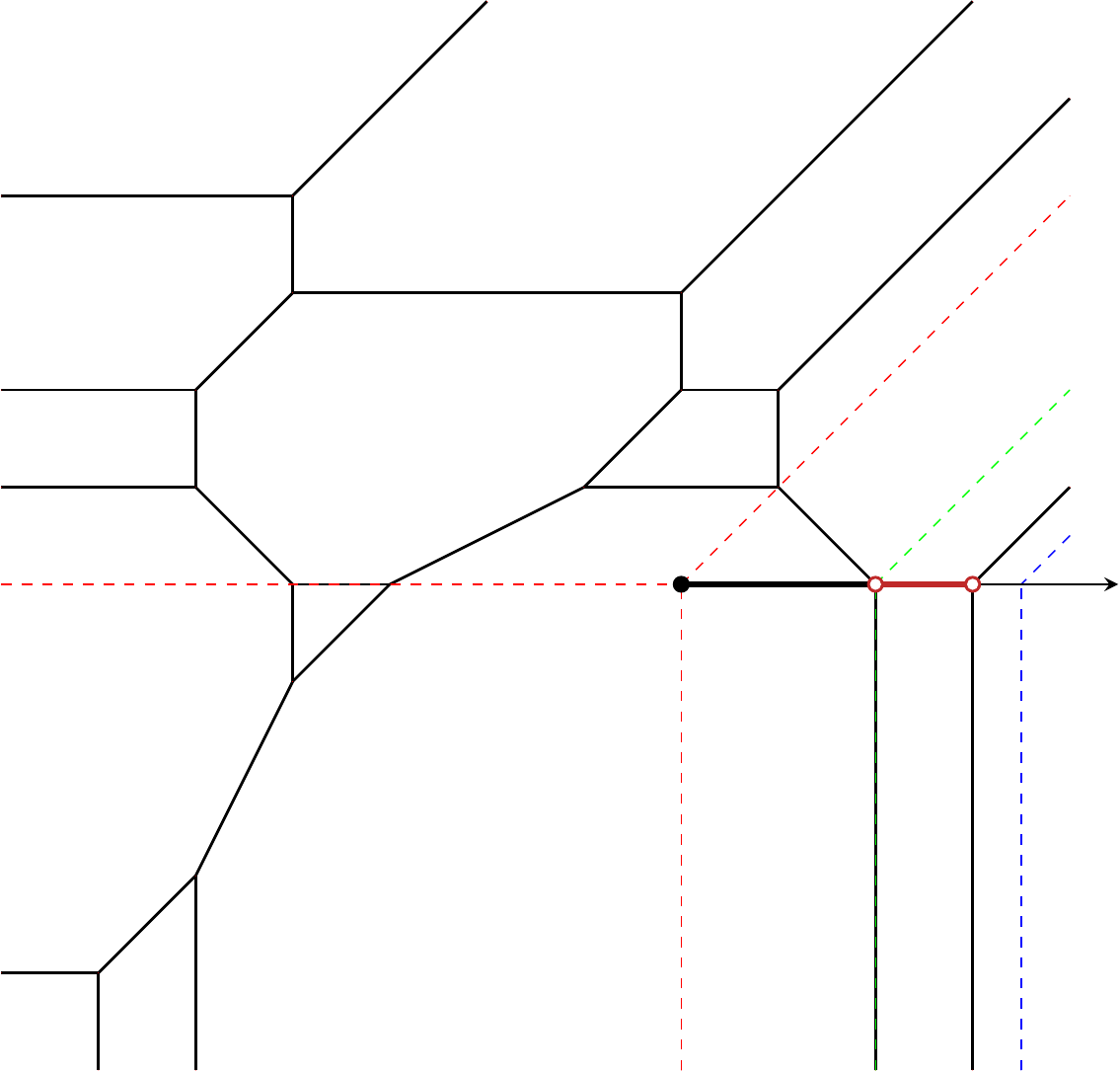}
\end{minipage}
\begin{minipage}[c]{0.5\textwidth}
		For $\lambda_2 =(0,5,5,9,8,5,6,9,9,4,2,7,8,7,1)$ 	we have  $(\lambda_{2})_{8}-(\lambda_{2})_{4} = (\lambda_{2})_{11}-(\lambda_{2})_{6}$, so we see Shape (J).
\end{minipage}

\begin{minipage}[c]{.4\textwidth}
	\centering
	\includegraphics[width=0.5\textwidth]{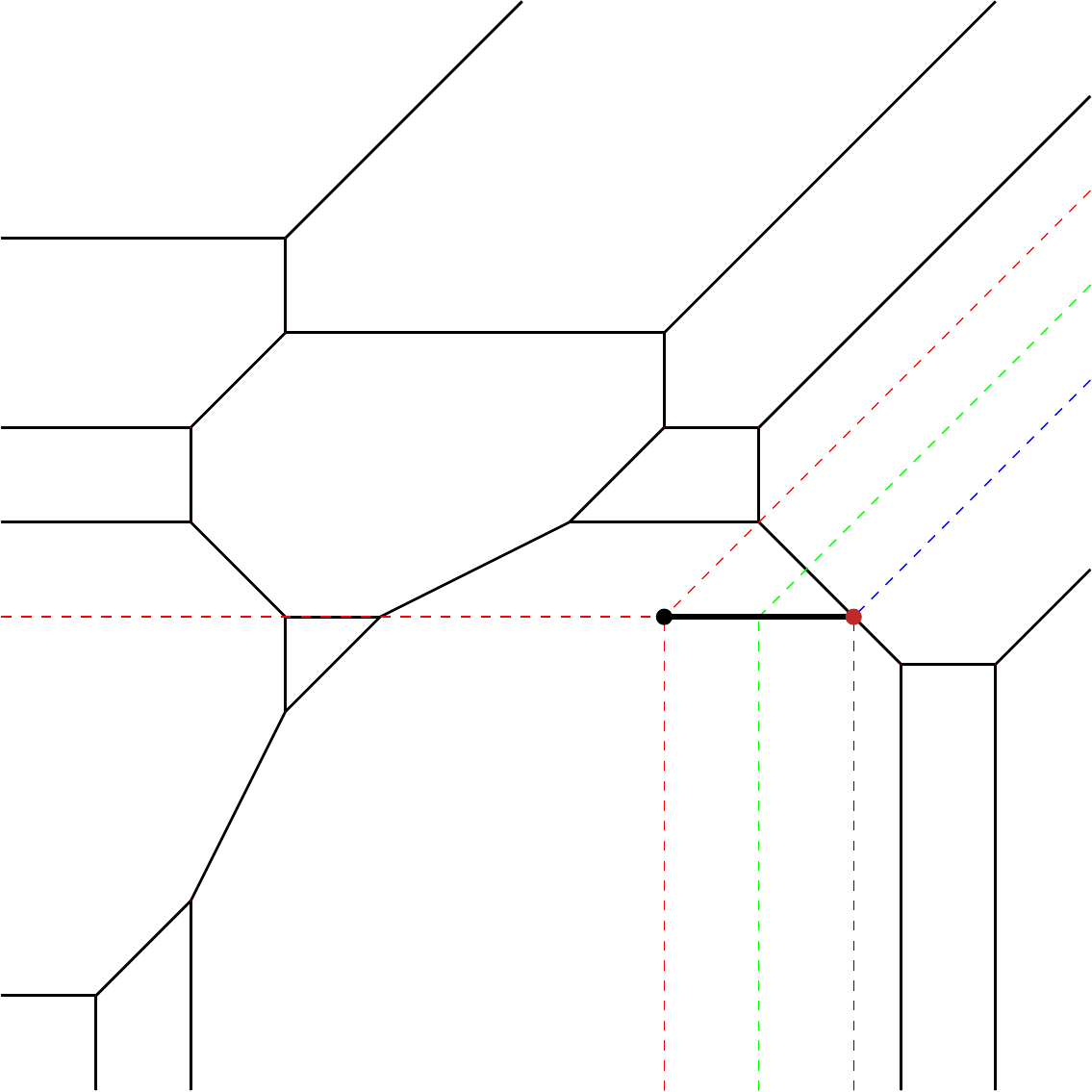}
\end{minipage}
\begin{minipage}[c]{.5\textwidth}
	For $\lambda_3 =(0,5,5,9,8,5,5.5,9,9,4,1,7,8,7,1)$ we have $ (\lambda_{3})_{8}-(\lambda_{3})_{4} > (\lambda_{3})_{11}-(\lambda_{3})_{6} $ so we see Shape (F).
\end{minipage}	
	
		\caption{The different shapes that appear for one bitangent class when choosing different edge lengths are given by linear inequalities. This example together with the figures are from \cite[Example 2.1 Figure 3]{1GP21}.}\label{fig:exampledef}
\end{figure}

\end{proof}

The hyperplanes that determine the changes of the shapes for a deformation class are stored in \texttt{polymake} as a property of the deformation motif. We can further analyze their chamber decomposition using \texttt{polymake} object \texttt{HyperplaneArrangements}~\cite{KP20}.
\begin{lstlisting}
fan > $DS = new DualSubdivisionOfQuartic(MAXIMAL_CELLS=>[[0,1,2],[1,2,3], [2,3,4],[5,8,9],[9,13,14],[8,11,12],[3,7,8],[6,10,11],[3,6,11], [3,7,11],[8,12,13],[3,4,8],[2,5,8],[8,9,13],[7,8,11],[2,4,8]]);
fan > $DM = $DS->ALL_DEFORMATION_MOTIFS;
fan > print $DM->[6]->TYPE;
W...HH+(xz)
\end{lstlisting}
The triangulation has a dual deformation motif of type (W...HH)$+(xz)$. We now look at the chamber decomposition of the secondary cone defined by the hyperplane arrangement given by the hyperplanes associated to the deformation class.
\begin{lstlisting}
fan > $Hyps = $DM->[6]->HYPERPLANES;
fan > $SC = $DS->SECONDARY_CONE;
fan > $HA = new HyperplaneArrangement(HYPERPLANES=>$Hyps); 
fan > $CD = $HA->CHAMBER_DECOMPOSITION;
\end{lstlisting}
We now look at the first chamber in the arrangement. We extract the corresponding cone and pick a point in its relative interior. 
\begin{lstlisting}
fan > $R = $CD->RAYS;
fan > $NR = $R->minor($CD->MAXIMAL_CONES->[0], All);
fan > $C = new Cone(INPUT_RAYS=>$NR, INPUT_LINEALITY=>$HA->LINEALITY_SPACE); 
fan > $Cone = intersection($C, $SC); #intersect with the secondary cone
fan > print $Cone->DIM;
15
fan > $NP = $Cone->REL_INT_POINT; 
\end{lstlisting}
We compute the tropical quartic curve defined by the coordinates of the point we have just computed and check its bitangent shapes. 
\begin{lstlisting}
fan > application 'tropical';
fan > $H = new QuarticCurve<Min>(COEFFICIENTS=>$NP);
fan > print $H->BITANGENT_SHAPES;
A B B E F N EE
\end{lstlisting}
In this example, we see that tropical quartic curves with coefficients in the cone corresponding to the first chamber in the hyperplane arrangement have a bitangent class of shape (EE) within the deformation class (W...HH)$+(xz)$.

\begin{landscape}
	\begin{longtable}{lcl}
		deformation class	& shape &condition  \\
		\hline
		\endhead
		(B)-(H/H') & (B) & $ \lambda_{1,v+1}-\lambda_{1,v} > 2\lambda_{3 1}-\lambda_{4 0}-\lambda_{2 1} $\\
		&
		(H')& $\lambda_{1,v+1}-\lambda_{1,v}= 2\lambda_{3 1}-\lambda_{4 0}-\lambda_{2 1}$\\ 
		&(H)& $\lambda_{1,v+1}-\lambda_{1,v} < 2\lambda_{3 1}-\lambda_{4 0}-\lambda_{2 1} $\\
		\hline
		(B)-(H')-(H)-(H')$_{(yz)}$-(B)$_{(yz)}$&
		(B) 	&$	\lambda_{1,v+1}-\lambda_{1,v} > 2\lambda_{3 1}-\lambda_{4 0}-\lambda_{2 1} $\\
		&(H') 	&$	\lambda_{1,v+1}-\lambda_{1,v} = 2\lambda_{3 1}-\lambda_{4 0}-\lambda_{2 1} $\\
		&(H) &$		\lambda_{1,v+1}-\lambda_{1,v} < 2\lambda_{3 1}-\lambda_{4 0}-\lambda_{2 1} $\\
		&(H')$_{(yz)}$&$	\lambda_{1,v+1}-\lambda_{1,v} = -2\lambda_{3 0}-\lambda_{4 0}-\lambda_{2 1} $\\
		&(B)$_{(yz)}$&$	\lambda_{1,v+1}-\lambda_{1,v} <-2\lambda_{3 0}-\lambda_{4 0}-\lambda_{2 1} $\\
		\hline
		(B)-(M)-(B)$_{(yz)}$&
		(B)  	&  $ \lambda_{1,v+1}-\lambda_{1,v} > \lambda_{3 1}-\lambda_{3 0}$\\ 
		&(M)  	  &$\lambda_{1,v+1}-\lambda_{1,v} = \lambda_{3 1}-\lambda_{3 0} $\\
		&(B)$_{(yz)}$&$ \lambda_{1,v+1}-\lambda_{1,v} < \lambda_{3 1}-\lambda_{3 0}$\\
		\hline
		(D)-(L')-(Q)&
		(D) &$ -2\lambda_{1 2}+\lambda_{1 1} +\lambda_{2 2} > 2\lambda_{1 0}-\lambda_{0 0}-\lambda_{1 1} $\\
		&(L') &$ -2\lambda_{1 2}+\lambda_{1 1} +\lambda_{2 2} = 2\lambda_{1 0}-\lambda_{0 0}-\lambda_{1 1} $\\
		&(Q)& $ 3\lambda_{1 1}-2\lambda_{1 2} +\lambda_{0 0} <-2\lambda_{1 2}+\lambda_{1 1} +\lambda_{2 2} < 2\lambda_{1 0}-\lambda_{0 0}-\lambda_{1 1} $\\
		
		\hline
		(D)-(L')-(Q)-(Q')-(R)&
		(D) &$ -2\lambda_{1 2}+\lambda_{1 1} +\lambda_{2 2}> 2\lambda_{1 0}-\lambda_{0 0}-\lambda_{1 1} $\\
		&(L') &$ -2\lambda_{1 2}+\lambda_{1 1} +\lambda_{2 2} = 2\lambda_{1 0}-\lambda_{0 0}-\lambda_{1 1} $\\
		&(Q) &$ 2\lambda_{0 1}-\lambda_{0 0} +\lambda_{1 1} <-2\lambda_{1 2}+\lambda_{1 1} +\lambda_{2 2} < 2\lambda_{1 0}-\lambda_{0 0}-\lambda_{1 1} $\\
		&(Q')&$  2\lambda_{0 1}-\lambda_{0 0} +\lambda_{1 1} =-2\lambda_{1 2}+\lambda_{1 1} +\lambda_{2 2} < 2\lambda_{1 0}-\lambda_{0 0}-\lambda_{1 1}$\\ 
		&(R)&$ -2\lambda_{1 2}+\lambda_{1 1} +\lambda_{2 2} < 2\lambda_{0 1}-\lambda_{0 0} -\lambda_{1 1} < 2\lambda_{1 0}-\lambda_{0 0}-\lambda_{1 1}$\\ 
		\hline
		(D)-(L)-(O)&
		(D) &$ -2\lambda_{1 2}+\lambda_{1 1} +\lambda_{2 2} > \lambda_{1 0}-\lambda_{0 1} $\\
		&(L) & $-2\lambda_{1 2}+\lambda_{1 1} +\lambda_{2 2} = \lambda_{1 0}-\lambda_{0 1} $\\
		&(O) &$ -2\lambda_{1 2}+\lambda_{1 1} +\lambda_{2 2} <\lambda_{1 0}-\lambda_{0 1}$\\
		\hline
		(E)-(F)-(J)&
		(E) & $\lambda_{1 v+1}-\lambda_{1 v} < \lambda_{3 1}-\lambda_{3 0}$ \\
		&(J) & $\lambda_{1 v+1}-\lambda_{1 v} = \lambda_{3 1}-\lambda_{3 0} $\\
		&(F)& $ \lambda_{1 v+1}-\lambda_{1 v} > \lambda_{3 1}-\lambda_{3 0} $\\
		\hline
		(G)-(I)-(N)-(I)$_{(x\,y)}$-(G)$_{(x\,y)}$&
		(G) & $\lambda_{10}-\lambda_{01} < -2\lambda_{0 3 }+\lambda_{11}+\lambda_{ 0 4}  < 2\lambda_{1 2 }-\lambda_{11}-\lambda_{ 0 4}$ \\
		blue edge  $\overline{(1,1),(0,4)}$&(I) & $\lambda_{10}-\lambda_{01} = -2\lambda_{0 3 }+\lambda_{11}+\lambda_{ 0 4}  < 2\lambda_{1 2 }-\lambda_{11}-\lambda_{ 0 4}$ \\
		&(N)& $ -2\lambda_{0 3 }+\lambda_{11}+\lambda_{ 0 4}  <\lambda_{10}-\lambda_{01} < 2\lambda_{1 2 }-\lambda_{11}-\lambda_{ 0 4}$ \\
		&(I)$_{(x\,y)}$ &  $ -2\lambda_{0 3 }+\lambda_{11}+\lambda_{ 0 4}  <\lambda_{10}-\lambda_{01} = 2\lambda_{1 2 }-\lambda_{11}-\lambda_{ 0 4}$ \\
		&(G)$_{(x\,y)}$ & $ -2\lambda_{0 3 }+\lambda_{11}+\lambda_{ 0 4}  < 2\lambda_{1 2 }-\lambda_{11}-\lambda_{ 0 4}<\lambda_{10}-\lambda_{01} $ \\
		\hline
		(G)-(I)-(N)-(I)$_{(x\,y)}$-(G)$_{(x\,y)}$&
		(G) & $\lambda_{10}-\lambda_{01} < -2\lambda_{1 2 }+\lambda_{11}+\lambda_{ 22}  < 2\lambda_{21}-\lambda_{11}-\lambda_{ 22}$ \\
		blue edge  $\overline{(1,1),(2,2)}$&(I) & $\lambda_{10}-\lambda_{01} = -2\lambda_{1 2 }+\lambda_{11}+\lambda_{ 22}  < 2\lambda_{21}-\lambda_{11}-\lambda_{ 22}$ \\
		&(N)& $ -2\lambda_{1 2 }+\lambda_{11}+\lambda_{ 22}  <\lambda_{10}-\lambda_{01} < 2\lambda_{21}-\lambda_{11}-\lambda_{ 22}$ \\
		&(I)$_{(x\,y)}$ &  $ -2\lambda_{1 2 }+\lambda_{11}+\lambda_{ 22}  <\lambda_{10}-\lambda_{01} = 2\lambda_{21}-\lambda_{11}-\lambda_{ 22}$ \\
		&(G)$_{(x\,y)}$ & $ -2\lambda_{1 2 }+\lambda_{11}+\lambda_{ 22}   < 2\lambda_{21}-\lambda_{11}-\lambda_{22}<\lambda_{10}-\lambda_{01}$ \\
		\hline
		(G)-(K)-(U)-(T)-(T')&
		(G) & $2\lambda_{10}-\lambda_{00} -\lambda_{11}< -2\lambda_{21}+\lambda_{11}+\lambda_{40}  < 2\lambda_{30}-\lambda_{11}-\lambda_{40}$ \\
		blue edge  $\overline{(1,1),(4,0)}$&(K) &$2\lambda_{10}-\lambda_{00} -\lambda_{11}= -2\lambda_{21}+\lambda_{11}+\lambda_{40}  < 2\lambda_{30}-\lambda_{11}-\lambda_{40}$\\
		&(U)& $-2\lambda_{21}+\lambda_{11}+\lambda_{40}  <2\lambda_{10}-\lambda_{00} -\lambda_{11}<  2\lambda_{30}-\lambda_{11}-\lambda_{40}$ \\
		&(T') &  $-2\lambda_{21}+\lambda_{11}+\lambda_{40}  <2\lambda_{10}-\lambda_{00} -\lambda_{11}=  2\lambda_{30}-\lambda_{11}-\lambda_{40}$\\
		&(T) & $-2\lambda_{21}+\lambda_{11}+\lambda_{40}<2\lambda_{30}-\lambda_{11}-\lambda_{40}   <2\lambda_{10}-\lambda_{00} -\lambda_{11} $\\
		\hline
		(G)-(K)-(U)-(T)-(T')&
		(G) & $2\lambda_{10}-\lambda_{00} -\lambda_{11}< -2\lambda_{12}+\lambda_{11}+\lambda_{22}  < 2\lambda_{21}-\lambda_{11}-\lambda_{22}$ \\
		blue edge  $\overline{(1,1),(2,2)}$&(K) &$2\lambda_{10}-\lambda_{00} -\lambda_{11}= -2\lambda_{12}+\lambda_{11}+\lambda_{22}  < 2\lambda_{21}-\lambda_{11}-\lambda_{22}$\\
		&(U)& $-2\lambda_{12}+\lambda_{11}+\lambda_{22}  <2\lambda_{10}-\lambda_{00} -\lambda_{11}<  2\lambda_{21}-\lambda_{11}-\lambda_{22}$ \\
		&(T') &  $-2\lambda_{12}+\lambda_{11}+\lambda_{22}  <2\lambda_{10}-\lambda_{00} -\lambda_{11}=  2\lambda_{21}-\lambda_{11}-\lambda_{22}$\\
		&(T) & $-2\lambda_{12}+\lambda_{11}+\lambda_{22}<2\lambda_{21}-\lambda_{11}-\lambda_{22}   <2\lambda_{10}-\lambda_{00} -\lambda_{11} $\\
		\hline
		(G)-(K)-(U)-(T)-(V)+&
		(G) & $-2\lambda_{01}+\lambda_{00} +\lambda_{11}<2\lambda_{10}-\lambda_{00} -\lambda_{11}< -2\lambda_{03}+\lambda_{11}+\lambda_{04}  < 2\lambda_{12}-\lambda_{11}-\lambda_{04}$ \\
		blue edge  $\overline{(1,1),(0,4)}$&(K) &$-2\lambda_{01}+\lambda_{00} +\lambda_{11}<2\lambda_{10}-\lambda_{00} -\lambda_{11}= -2\lambda_{03}+\lambda_{11}+\lambda_{04}  < 2\lambda_{12}-\lambda_{11}-\lambda_{04}$\\
		&(U)& $-2\lambda_{01}+\lambda_{00} +\lambda_{11}<-2\lambda_{03}+\lambda_{11}+\lambda_{04}<2\lambda_{10}-\lambda_{00} -\lambda_{11}< 2\lambda_{12}-\lambda_{11}-\lambda_{04}$ \\
		&(U') &  $-2\lambda_{01}+\lambda_{00} +\lambda_{11}=-2\lambda_{03}+\lambda_{11}+\lambda_{04}<2\lambda_{10}-\lambda_{00} -\lambda_{11}< 2\lambda_{12}-\lambda_{11}-\lambda_{04}$\\
		&(T') & $-2\lambda_{01}+\lambda_{00} +\lambda_{11}<-2\lambda_{03}+\lambda_{11}+\lambda_{04}<2\lambda_{10}-\lambda_{00} -\lambda_{11}= 2\lambda_{12}-\lambda_{11}-\lambda_{04}$\\
		&(T) & $-2\lambda_{01}+\lambda_{00} +\lambda_{11}<-2\lambda_{03}+\lambda_{11}+\lambda_{04}<2\lambda_{12}-\lambda_{11}-\lambda_{04}<2\lambda_{10}-\lambda_{00} -\lambda_{11} $\\
		&(T')$_{(x\,y)}$ & $-2\lambda_{01}+\lambda_{00} +\lambda_{11}=-2\lambda_{03}+\lambda_{11}+\lambda_{04}<2\lambda_{12}-\lambda_{11}-\lambda_{04}<2\lambda_{10}-\lambda_{00} -\lambda_{11} $\\
		&(T'')$_{(x\,y)}$ & $-2\lambda_{01}+\lambda_{00} +\lambda_{11}=-2\lambda_{03}+\lambda_{11}+\lambda_{04}<2\lambda_{12}-\lambda_{11}-\lambda_{04}=2\lambda_{10}-\lambda_{00} -\lambda_{11} $\\
		&(V) & $-2\lambda_{03}+\lambda_{11}+\lambda_{04}<-2\lambda_{01}+\lambda_{00} +\lambda_{11}<2\lambda_{10}-\lambda_{00} -\lambda_{11}<2\lambda_{12}-\lambda_{11}-\lambda_{04} $\\
		&(U')$_{(x\,y)}$ & $-2\lambda_{03}+\lambda_{11}+\lambda_{04}<-2\lambda_{01}+\lambda_{00} +\lambda_{11}<2\lambda_{10}-\lambda_{00} -\lambda_{11}=2\lambda_{12}-\lambda_{11}-\lambda_{04} $\\
		&(U)$_{(x\,y)}$ & $-2\lambda_{03}+\lambda_{11}+\lambda_{04}<-2\lambda_{01}+\lambda_{00} +\lambda_{11}<2\lambda_{12}-\lambda_{11}-\lambda_{04}<2\lambda_{10}-\lambda_{00} -\lambda_{11} $\\
		&(K)$_{(x\,y)}$ & $-2\lambda_{03}+\lambda_{11}+\lambda_{04}<-2\lambda_{01}+\lambda_{00} +\lambda_{11}=2\lambda_{12}-\lambda_{11}-\lambda_{04}<2\lambda_{10}-\lambda_{00} -\lambda_{11} $\\
		&(G)$_{(x\,y)}$ & $-2\lambda_{03}+\lambda_{11}+\lambda_{04}<2\lambda_{12}-\lambda_{11}-\lambda_{04}<-2\lambda_{01}+\lambda_{00} +\lambda_{11}<2\lambda_{10}-\lambda_{00} -\lambda_{11} $\\
		\hline
		(G)-(K)-(U)-(T)-(V)+&
		(G) & $-2\lambda_{01}+\lambda_{00} +\lambda_{11}<2\lambda_{10}-\lambda_{00} -\lambda_{11}< -2\lambda_{12}+\lambda_{11}+\lambda_{22}  < 2\lambda_{21}-\lambda_{11}-\lambda_{22}$ \\
		blue edge  $\overline{(1,1),(2,2)}$&(K) &$-2\lambda_{01}+\lambda_{00} +\lambda_{11}<2\lambda_{10}-\lambda_{00} -\lambda_{11}= -2\lambda_{12}+\lambda_{11}+\lambda_{22}  < 2\lambda_{21}-\lambda_{11}-\lambda_{22}$\\
		&(U)& $-2\lambda_{01}+\lambda_{00} +\lambda_{11}<-2\lambda_{12}+\lambda_{11}+\lambda_{22}<2\lambda_{10}-\lambda_{00} -\lambda_{11}< 2\lambda_{21}-\lambda_{11}-\lambda_{22}$ \\
		&(U') &  $-2\lambda_{01}+\lambda_{00} +\lambda_{11}=-2\lambda_{12}+\lambda_{11}+\lambda_{22}<2\lambda_{10}-\lambda_{00} -\lambda_{11}< 2\lambda_{21}-\lambda_{11}-\lambda_{22}$\\
		&(T') & $-2\lambda_{01}+\lambda_{00} +\lambda_{11}<-2\lambda_{12}+\lambda_{11}+\lambda_{22}<2\lambda_{10}-\lambda_{00} -\lambda_{11}= 2\lambda_{21}-\lambda_{11}-\lambda_{22}$\\
		&(T) & $-2\lambda_{01}+\lambda_{00} +\lambda_{11}<-2\lambda_{12}+\lambda_{11}+\lambda_{22}<2\lambda_{21}-\lambda_{11}-\lambda_{22}<2\lambda_{10}-\lambda_{00} -\lambda_{11} $\\
		&(T')$_{(x\,y)}$ & $-2\lambda_{01}+\lambda_{00} +\lambda_{11}=-2\lambda_{12}+\lambda_{11}+\lambda_{22}<2\lambda_{21}-\lambda_{11}-\lambda_{22}<2\lambda_{10}-\lambda_{00} -\lambda_{11} $\\
		&(T'')$_{(x\,y)}$ & $-2\lambda_{01}+\lambda_{00} +\lambda_{11}=-2\lambda_{12}+\lambda_{11}+\lambda_{22}<2\lambda_{21}-\lambda_{11}-\lambda_{22}=2\lambda_{10}-\lambda_{00} -\lambda_{11} $\\
		&(V) & $-2\lambda_{12}+\lambda_{11}+\lambda_{22}<-2\lambda_{01}+\lambda_{00} +\lambda_{11}<2\lambda_{10}-\lambda_{00} -\lambda_{11}<2\lambda_{21}-\lambda_{11}-\lambda_{22} $\\
		&(U')$_{(x\,y)}$ & $-2\lambda_{12}+\lambda_{11}+\lambda_{22}<-2\lambda_{01}+\lambda_{00} +\lambda_{11}<2\lambda_{10}-\lambda_{00} -\lambda_{11}=2\lambda_{21}-\lambda_{11}-\lambda_{22} $\\
		&(U)$_{(x\,y)}$ & $-2\lambda_{12}+\lambda_{11}+\lambda_{22}<-2\lambda_{01}+\lambda_{00} +\lambda_{11}<2\lambda_{21}-\lambda_{11}-\lambda_{22}<2\lambda_{10}-\lambda_{00} -\lambda_{11} $\\
		&(K)$_{(x\,y)}$ & $-2\lambda_{12}+\lambda_{11}+\lambda_{22}<-2\lambda_{01}+\lambda_{00} +\lambda_{11}=2\lambda_{21}-\lambda_{11}-\lambda_{22}<2\lambda_{10}-\lambda_{00} -\lambda_{11} $\\
		&(G)$_{(x\,y)}$ & $-2\lambda_{12}+\lambda_{11}+\lambda_{22}<2\lambda_{21}-\lambda_{11}-\lambda_{22}<-2\lambda_{01}+\lambda_{00} +\lambda_{11}<2\lambda_{10}-\lambda_{00} -\lambda_{11} $\\
		\hline
		(W)-(X)-(Y)-(EE)-(GG)
		& (W) & $\lambda_{20}-\lambda_{1 1}< \lambda_{10} -\lambda_{01}<-2\lambda{12}+\lambda_{13}+\lambda_{20}<2\lambda_{21}-\lambda_{20}-\lambda_{13}$ \\
		& (X) & $\lambda_{20}-\lambda_{1 1}< \lambda_{10} -\lambda_{01}=-2\lambda{12}+\lambda_{13}+\lambda_{20}<2\lambda_{21}-\lambda_{20}-\lambda_{13}$ \\
		& (Y) & $\lambda_{20}-\lambda_{1 1}< -2\lambda{12}+\lambda_{13}+\lambda_{20}<\lambda_{10} -\lambda_{01}<2\lambda_{21}-\lambda_{20}-\lambda_{13}$ \\
		& (EE) & $\lambda_{20}-\lambda_{1 1}< -2\lambda{12}+\lambda_{13}+\lambda_{20}<2\lambda_{21}-\lambda_{20}-\lambda_{13}<\lambda_{10} -\lambda_{01}$ \\
		& (GG) & $\lambda_{20}-\lambda_{1 1}< -2\lambda{12}+\lambda_{13}+\lambda_{20}<\lambda_{10} -\lambda_{01}=2\lambda_{21}-\lambda_{20}-\lambda_{13}$ \\
		\hline
		(W)-...-(HH)+$(xz)$ & (W) & $\lambda_{20}-\lambda_{11} < \lambda_{10}-\lambda_{01} < -2\lambda_{21}+\lambda_{31}+\lambda_{20}<2\lambda_{30}-\lambda_{20}-\lambda_{31}$\\
		& & $-2\lambda_{10}+\lambda_{01}+\lambda_{20}<2\lambda_{11}-\lambda_{20}-\lambda_{01}<\lambda_{31}-\lambda_{30}<\lambda_{21}-\lambda_{20}$\\
		& (X) & $\lambda_{20}-\lambda_{11} < \lambda_{10}-\lambda_{01} < -2\lambda_{21}+\lambda_{31}+\lambda_{20}<2\lambda_{30}-\lambda_{20}-\lambda_{31}$\\
		& & $-2\lambda_{10}+\lambda_{01}+\lambda_{20}<2\lambda_{11}-\lambda_{20}-\lambda_{01}=\lambda_{31}-\lambda_{30}<\lambda_{21}-\lambda_{20}$\\
		& (Y) & $\lambda_{20}-\lambda_{11} < \lambda_{10}-\lambda_{01} < -2\lambda_{21}+\lambda_{31}+\lambda_{20}<2\lambda_{30}-\lambda_{20}-\lambda_{31}$\\
		& & $-2\lambda_{10}+\lambda_{01}+\lambda_{20}<\lambda_{31}-\lambda_{30}<2\lambda_{11}-\lambda_{20}-\lambda_{01}<\lambda_{21}-\lambda_{20}$\\
		& (Z) & $\lambda_{20}-\lambda_{11} < \lambda_{10}-\lambda_{01} = -2\lambda_{21}+\lambda_{31}+\lambda_{20}<2\lambda_{30}-\lambda_{20}-\lambda_{31}$\\
		& & $-2\lambda_{10}+\lambda_{01}+\lambda_{20}<2\lambda_{11}-\lambda_{20}-\lambda_{01}=\lambda_{31}-\lambda_{30}<\lambda_{21}-\lambda_{20}$\\
		& (AA) & $\lambda_{20}-\lambda_{11} < \lambda_{10}-\lambda_{01} = -2\lambda_{21}+\lambda_{31}+\lambda_{20}<2\lambda_{30}-\lambda_{20}-\lambda_{31}$\\
		& & $-2\lambda_{10}+\lambda_{01}+\lambda_{20}<\lambda_{31}-\lambda_{30}<2\lambda_{11}-\lambda_{20}-\lambda_{01}<\lambda_{21}-\lambda_{20}$\\
		& (BB) & $\lambda_{20}-\lambda_{11} <  -2\lambda_{21}+\lambda_{31}+\lambda_{20}<\lambda_{10}-\lambda_{01}<2\lambda_{30}-\lambda_{20}-\lambda_{31}$\\
		& & $-2\lambda_{10}+\lambda_{01}+\lambda_{20}<\lambda_{31}-\lambda_{30}<2\lambda_{11}-\lambda_{20}-\lambda_{01}<\lambda_{21}-\lambda_{20}$\\
		& (CC) & $\lambda_{20}-\lambda_{11} <  -2\lambda_{21}+\lambda_{31}+\lambda_{20}<2\lambda_{30}-\lambda_{20}-\lambda_{31}<\lambda_{10}-\lambda_{01}$\\
		& & $-2\lambda_{10}+\lambda_{01}+\lambda_{20}<\lambda_{31}-\lambda_{30}<2\lambda_{11}-\lambda_{20}-\lambda_{01}<\lambda_{21}-\lambda_{20}$\\
		& (DD) & $\lambda_{20}-\lambda_{11} <  -2\lambda_{21}+\lambda_{31}+\lambda_{20}<\lambda_{10}-\lambda_{01}=2\lambda_{30}-\lambda_{20}-\lambda_{31}$\\
		& & $-2\lambda_{10}+\lambda_{01}+\lambda_{20}<\lambda_{31}-\lambda_{30}<2\lambda_{11}-\lambda_{20}-\lambda_{01}<\lambda_{21}-\lambda_{20}$\\
		& (EE) & $\lambda_{20}-\lambda_{11} <  -2\lambda_{21}+\lambda_{31}+\lambda_{20}<2\lambda_{30}-\lambda_{20}-\lambda_{31}<\lambda_{10}-\lambda_{01}$\\
		& & $-2\lambda_{10}+\lambda_{01}+\lambda_{20}<2\lambda_{11}-\lambda_{20}-\lambda_{01}<\lambda_{31}-\lambda_{30}<\lambda_{21}-\lambda_{20}$\\
		& (FF) & $\lambda_{20}-\lambda_{11} <  -2\lambda_{21}+\lambda_{31}+\lambda_{20}<2\lambda_{30}-\lambda_{20}-\lambda_{31}<\lambda_{10}-\lambda_{01}$\\
		& & $-2\lambda_{10}+\lambda_{01}+\lambda_{20}<2\lambda_{11}-\lambda_{20}-\lambda_{01}=\lambda_{31}-\lambda_{30}<\lambda_{21}-\lambda_{20}$\\
		& (GG) & $\lambda_{20}-\lambda_{11} <  -2\lambda_{21}+\lambda_{31}+\lambda_{20}<\lambda_{10}-\lambda_{01}=2\lambda_{30}-\lambda_{20}-\lambda_{31}$\\
		& & $-2\lambda_{10}+\lambda_{01}+\lambda_{20}<2\lambda_{11}-\lambda_{20}-\lambda_{01}<\lambda_{31}-\lambda_{30}<\lambda_{21}-\lambda_{20}$\\
		& (HH) & $\lambda_{20}-\lambda_{11} <  -2\lambda_{21}+\lambda_{31}+\lambda_{20}<\lambda_{10}-\lambda_{01}=2\lambda_{30}-\lambda_{20}-\lambda_{31}$\\
		& & $-2\lambda_{10}+\lambda_{01}+\lambda_{20}<2\lambda_{11}-\lambda_{20}-\lambda_{01}=\lambda_{31}-\lambda_{30}<\lambda_{21}-\lambda_{20}$\\
		& $+(xz)$ & to get all the $(xz)$ transposes, apply $(xz)$ to the conditions.\\
		\caption{The coefficient conditions for the shape changes of the bitangents for the identity representative to each deformation class with more than one shape.Note that $\lambda_{i,j}=\text{val}(a_{i,j})$.} \label{tab:all}
	\end{longtable}

\end{landscape}

\bibliography{bibliography}

\begin{thebibliography}{10}

\bibitem{MaRDI}
Mardi: Mathematical research data initiative.
\newblock \url{https://www.mardi4nfdi.org/}.

\bibitem{BLMPR16}
Matthew Baker, Yoav Len, Ralph Morrison, Nathan Pflueger, and Qingchun Ren.
\newblock Bitangents of tropical plane quartic curves.
\newblock {\em Math. Z.}, 282(3-4):1017--1031, 2016.

\bibitem{BJMS15}
Sarah Brodsky, Michael Joswig, Ralph Morrison, and Bernd Sturmfels.
\newblock Moduli of tropical plane curves.
\newblock {\em Res. Math. Sci.}, 2:Art. 4, 31, 2015.

\bibitem{CueMa20}
Maria~Angelica Cueto and Hannah Markwig.
\newblock Combinatorics and real lifts of bitangents to tropical quartic
  curves.
\newblock \arXiv{2004.10891v2}, 2021.

\bibitem{polymake:2000}
Ewgenij Gawrilow and Michael Joswig.
\newblock {\tt polymake}: a framework for analyzing convex polytopes.
\newblock In {\em Polytopes---combinatorics and computation ({O}berwolfach,
  1997)}, volume~29 of {\em DMV Sem.}, pages 43--73. Birkh\"auser, Basel, 2000.

\bibitem{extension:tropquartics}
Alheydis Geiger and Marta Panizzut.
\newblock \texttt{polymake} extension \texttt{TropicalQuarticCurves}.
\newblock \url{https://polymake.org/doku.php/extensions/tropicalquarticcurves}.

\bibitem{1GP21}
Alheydis Geiger and Marta Panizzut.
\newblock A tropical count of real bitangents to plane quartic curves.
\newblock 2021.

\bibitem{JPS21}
Michael Joswig, Marta Panizzut, and Bernd Sturmfels.
\newblock The {S}chl\"{a}fli fan.
\newblock {\em Discrete Comput. Geom.}, 64(2):355--381, 2020.

\bibitem{KP20}
Lars Kastner and Marta Panizzut.
\newblock Hyperplane arrangements in polymake.
\newblock In Anna~Maria Bigatti, Jacques Carette, James~H. Davenport, Michael
  Joswig, and Timo de~Wolff, editors, {\em Mathematical Software -- ICMS 2020},
  pages 232--240, Cham, 2020. Springer International Publishing.

\bibitem{LaVo19}
Hannah Larson and Isabel Vogt.
\newblock An enriched count of the bitangents to a smooth plane quartic curve,
  2019.

\bibitem{LeMa19}
Yoav Len and Hannah Markwig.
\newblock Lifting tropical bitangents.
\newblock {\em J. Symbolic Comput.}, 96:122--152, 2020.

\bibitem{MS15}
Diane Maclagan and Bernd Sturmfels.
\newblock {\em Introduction to tropical geometry}, volume 161 of {\em Graduate
  Studies in Mathematics}.
\newblock American Mathematical Society, Providence, RI, 2015.

\bibitem{MPS21}
Hannah Markwig, Kris Shaw, and Sam Payne.
\newblock Bitangents to plane quartics via tropical geometry:
  $\mathbb{A}^1$-enumeration, rationality, and avoidance loci.
\newblock In preparation.

\bibitem{polydb:paper}
Andreas Paffenholz.
\newblock \texttt{polyDB}: a database for polytopes and related objects.
\newblock In {\em Algorithmic and experimental methods in algebra, geometry,
  and number theory}, pages 533--547. Springer, Cham, 2017.

\bibitem{Plue39}
Julius Pl\"{u}cker.
\newblock {\em {Theorie der algebraischen Curven: gegr\"{u}ndet auf eine neue
  Behandlungsweise der analytischen Geometrie}}.
\newblock Bonn: Adolph Marcus., 1839.

\bibitem{Zeu73}
Hieronymus~Georg Zeuthen.
\newblock Sur les dif\'erentes formes des curbes planes du quatri\`eme ordre.
\newblock {\em Math. Ann.}, 7:408--432, 1873.

\end{thebibliography}
\end{document}